\documentclass[12pt,reqno]{article}

\usepackage[usenames]{color}
\usepackage{graphicx}
\usepackage{amscd,amsmath,amssymb,amsthm}

\usepackage[all]{xy}
\usepackage{fullpage}
\usepackage{float}

\usepackage{hyperref,url}

\newtheorem{theorem}{Theorem}[section]
\newtheorem{lemma}[theorem]{Lemma}
\newtheorem{proposition}[theorem]{Proposition}

\theoremstyle{definition}
\newtheorem{definition}[theorem]{Definition}

\theoremstyle{remark}

\numberwithin{equation}{section}

\begin{document}

\title{\LARGE\bf Foldings and Meanders}

\author{
\bf{St\'{e}phane Legendre}\\
Team of Mathematical Eco-Evolution\\
Ecole Normale Sup\'{e}rieure\\
Paris, France\\
\texttt{legendre@ens.fr}\\}

\date{31/05/2012}

\maketitle

\begin{abstract}
We review the stamp folding problem, the number of ways to fold a strip of $n$ stamps, and the related problem of enumerating meander configurations. The study of equivalence classes of foldings and meanders under symmetries allows to characterize and enumerate folding and meander shapes. Symmetric foldings and meanders are described, and relations between folding and meandric sequences are given. Extended tables for these sequences are provided.
\end{abstract}

\section{Introduction}\label{introduction}

How many ways to fold a strip of $n$ stamps? Lucas \cite{lucas1891} stated the problem in 1891, quoting Emile Lemoine for suggesting it. Since then, though a large number of terms of the folding sequences have been computed, no closed formula has been found.

In the 1930's Sainte-Lag\"ue \cite{sainte-lague1937} lists the terms of $t(n)$ --- the number of foldings of $n$ labeled stamps --- up to $n = 10$, describing how to reduce the computation. In 1950, Touchard \cite{touchard1950} brings several interesting ideas, though not leading to a solution.

The year 1968 sees two major advances by Lunnon \cite{lunnon1968} and Koehler \cite{koehler1968}. Lunnon computes the values of $t(n)/2n$ up to $n = 24$ using a backtracking algorithm searching exhaustively for the foldings. He conjectures that the growth rate of the sequence is $3\frac{1}{2}$.

The approach of Koehler is different. He relates the enumeration of the $n$-foldings to the enumeration of patterns of chords joining $n$ points on a circle. These patterns involve the Catalan numbers. Koehler also computes the number $b(n)$ of foldings of $n$ blank stamps up to $n = 16$, using a computer program. 

The 5 foldings of 4 blank stamps made the front cover of the book of Sloane \cite{sloane1973}, the ancestor of the Online Encyclopedia of Integer Sequences \cite{OEIS} which contains several sequences concerning foldings and meanders \cite{sloane2002}. 

A fascinating appearance of the problem is the study of Phillips \cite{phillips}. A category of mazes --- simple alternating transit mazes, of which the Cretan maze \textit{circa} 604 BC is the oldest instance --- can be enumerated by the same sequence as the blank stamps foldings.

In relation with differential geometry, Arnold \cite{arnold1988} introduces the meanders, which resemble the foldings. After the pioneering work of Lando and Zvonkin \cite{lando&zvonkin1992,lando&zvonkin1993}, more recent studies mostly focus on meanders (e.g., \cite{DiFrancisco&al1996,jensen2000a,franz2002,albert&paterson2005}). In 2000, Jensen \cite{jensen2000a} could compute meandric numbers by not searching for all meanders. In this way, the terms of $r(n) = t(n)/n$, the number of stamp foldings with leaf 1 on top (a generalization of meanders, the semi-meanders), were extended up to $n = 45$. More recently, Sawada and Li \cite{sawada&li2012} describe a general algorithm to enumerate meander and folding patterns.\\

In this study, we review the various instances of foldings and meanders configurations, and describe their relations. Formulas relating folding and meander sequences are provided, and we revisit asymptotic properties. The point of view of foldings --- of which the meanders are a subclass --- is revived.

\section{Labeled stamps}\label{labeled_stamps}
The stamps are labeled $1, \ldots, n$ and folded into a stack one stamp wide and $n$ stamps tall (Fig. \ref{t4}). The horizontal segments represent the stamps, whereas the vertical segments represent the perforations.

Let $\mathcal{T}_n$ denote the set of distinct foldings of $n$ stamps. By `distinct' we mean: not counting the left-right symmetries (the stamps being stacked up as in Fig. \ref{t4}) and distinguishing the stamps (they are labeled $1,\ldots ,n$). A related problem considers blank stamps (next section).

Any folding can be represented by a permutation by listing the labels of the stamps from top to bottom (Fig. \ref{t4}). In the sequel we identify a folding with its associated element in the set of permutations $S_n$, so that a permutation pertaining to a folding is called a folding. For example, the permutation $(1324)$ is not a folding because a crossing occurs.
 
\begin{lemma}[Koehler \cite{koehler1968}]\label{condition_folding}
An $n$-permutation $p$ is a folding if and only if the circular order
\begin{displaymath}
p(i) < p(j) < p(i+1) < p(j+1)
\end{displaymath}
does not occur when $i$ and $j$ are either both odd or both even. By circular order it is meant any circular permutation of the inequalities above.
\end{lemma}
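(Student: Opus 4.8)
The plan is to pass from the physical folding to a planar arc diagram and to characterize realizability by a non-crossing condition, which I then read off as the stated inequality. Concretely, I would draw the $n$ stamps as $n$ points on a vertical axis, the stamp at level $p(k)$ of the stack, and represent the perforation joining consecutive stamps $i$ and $i+1$ by a semicircular arc joining the levels $p(i)$ and $p(i+1)$. The essential geometric observation is that, as one travels along the strip, the perforations fall alternately to the right and to the left of the stack, so the arc of perforation $i$ lies on the right exactly when $i$ is odd (say) and on the left exactly when $i$ is even. Thus the arcs split into a right family and a left family, and two arcs $i$ and $j$ belong to the same family precisely when $i$ and $j$ have the same parity.

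Next I would prove the bridge lemma: the permutation $p$ is realizable as an honest folding (the folded strip does not pass through itself) if and only if, on each side separately, no two arcs cross. Necessity is immediate, since two crossing arcs on the same side would force the sheet of paper to pass through itself. Sufficiency is the substantive direction and the step I expect to be the main obstacle: one must show that pairwise non-crossing of the same-side arcs actually suffices to assemble a genuine planar folding. I would argue this by induction on $n$, deleting an end stamp of the strip (label $1$, say, together with its single perforation): the remaining arc system is still pairwise non-crossing, so by the induction hypothesis the strip of $n-1$ stamps folds, and the deleted stamp can then be reinserted at its prescribed level and joined to its neighbour by its arc, which by hypothesis meets none of the other same-side arcs and hence leaves the folding embedded. (Alternatively one may invoke the standard fact that a pairwise non-crossing system of chords in a disc is simultaneously realizable.)

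It then remains to translate ``two arcs of the same family cross'' into the inequality of the statement. The arc of perforation $i$ spans the levels $\{p(i),p(i+1)\}$ and the arc of perforation $j$ spans $\{p(j),p(j+1)\}$; two such arcs cross exactly when their endpoints interleave along the axis, that is, when the four levels occur in the alternating cyclic order $p(i),p(j),p(i+1),p(j+1)$. This interleaving is precisely the condition
\begin{displaymath}
p(i) < p(j) < p(i+1) < p(j+1)
\end{displaymath}
read up to a circular permutation of the four inequalities, the rotations accounting for which endpoint is smallest and the exchange of the roles of $i$ and $j$ accounting for the two possible alternating orders. Since ``same family'' means ``same parity'', the folding fails to be realizable exactly when this circular order occurs for some $i,j$ that are both odd or both even; combined with the bridge lemma, this yields the claimed equivalence.
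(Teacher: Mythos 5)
Your proposal is correct and follows essentially the same route as the paper: the key step in both is the parity observation (the perforation joining stamps $i$ and $i+1$ lies on the right for $i$ odd and on the left for $i$ even), combined with the identification of foldability with the non-crossing of same-side perforation arcs, whose interleaving condition is exactly the stated circular order. In fact you supply more detail than the paper, whose proof consists only of the parity observation shown by induction and leaves both the non-crossing characterization and its sufficiency direction (which you handle by induction on $n$, or by the standard fact on non-crossing arc systems) implicit.
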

\begin{proof}
The lemma relies on the following observation, which is shown by induction: in the representation of a stamp folding (Fig. \ref{t4}), the vertical segments on the left represent the perforations between the stamps labeled $2i$ and $2i+1$, whereas the vertical segments on the right represent the perforations between the stamps labeled $2i-1$ and $2i$.
\end{proof}

\begin{proposition}[Sainte-Lag\"ue \cite{sainte-lague1937}]\label{t_n}
The set $\mathcal{T}_n$ of foldings of $n$ labeled stamps can be partionned into $r(n)$ disjoint orbits of size $n$ under the action of the circular permutation $C = (23 \cdots n1)$. The set $\mathcal{R}_n$, the $n$-foldings $(1 \cdots)$ with leaf 1 on top, is a set of representatives of the orbits of $C$ in $\mathcal{T}_n$. Hence, the number of elements of $\mathcal{T}_n$  is
\begin{equation}\label{tr_n}
t(n) = nr(n).
\end{equation} 
\end{proposition}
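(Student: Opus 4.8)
The plan is to realize $\langle C\rangle\cong\mathbb{Z}/n\mathbb{Z}$ as a group acting \emph{freely} on $\mathcal{T}_n$ by relabeling, and then simply count orbits. Concretely, I interpret the action of $C=(23\cdots n1)$ on a permutation $p$ as left composition, $p\mapsto C\circ p$, so that $(C\circ p)(i)=p(i)+1\pmod n$; this is the cyclic relabeling of each stamp $k$ as $k+1$ (reading $C(n)=1$). Writing $p=(p_1\cdots p_n)$ for the list of labels from top to bottom, an element of $\mathcal{R}_n$ is characterised by $p_1=p(1)=1$, i.e.\ leaf $1$ sits on top.

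First I would check that this action preserves $\mathcal{T}_n$, i.e.\ that $C\circ p$ is a folding whenever $p$ is. Here I rely on Koehler's criterion (Lemma~\ref{condition_folding}). The forbidden pattern $p(i)<p(j)<p(i+1)<p(j+1)$ is a statement about the \emph{circular order} of the four values, together with a \emph{parity} condition on the positions $i,j$. Passing from $p$ to $C\circ p$ adds $1\pmod n$ to every value, which is a rotation of the circle on which the values $\{1,\dots,n\}$ are arranged and therefore leaves every circular-order relation intact, while the positions $i,j$ and their parities are not touched at all. Hence $p$ exhibits a forbidden pattern if and only if $C\circ p$ does, and $C$ restricts to a bijection of $\mathcal{T}_n$ onto itself. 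I expect this to be the main obstacle, since it is the only place where the combinatorial structure of foldings enters; the point to get right is precisely that Koehler's condition is phrased circularly in the values but depends only on the parity of the positions, so the relabeling (which may physically mirror the diagram left--right) still lands in $\mathcal{T}_n$.

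Next I would establish that the action is free, whence every orbit has size exactly $n$. If $C^k\circ p=p$ for some $0<k<n$, then $(C^k\circ p)(1)=p(1)+k\equiv p(1)\pmod n$ forces $k\equiv 0\pmod n$, a contradiction; equivalently $C^k\circ p=p$ implies $C^k=\mathrm{id}$. Thus the orbits of $\langle C\rangle$ on $\mathcal{T}_n$ are pairwise disjoint and each has cardinality $n$.

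Finally I would identify $\mathcal{R}_n$ as a transversal. Along a single orbit $\{p,\,C\circ p,\,\dots,\,C^{n-1}\circ p\}$ the top label $(C^k\circ p)(1)=p(1)+k\pmod n$ runs through all of $\{1,\dots,n\}$ exactly once, so precisely one element of the orbit has leaf $1$ on top. Therefore $\mathcal{R}_n$ meets each orbit in a single point and is a set of representatives, the number of orbits equals $r(n)=|\mathcal{R}_n|$, and partitioning $\mathcal{T}_n$ into these orbits gives $t(n)=|\mathcal{T}_n|=n\,|\mathcal{R}_n|=n\,r(n)$, which is \eqref{tr_n}.
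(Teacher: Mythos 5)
Your proof has the same skeleton as the paper's: stability of $\mathcal{T}_n$ under the circular action (which the paper, exactly like you, delegates to Lemma~\ref{condition_folding} for rigor), orbits of size exactly $n$, and $\mathcal{R}_n$ as a transversal. You are in fact more careful than the paper on one point: the paper's phrase ``as $C^n=1$, each orbit contains $n$ elements'' by itself only bounds orbit sizes by divisors of $n$, and your freeness argument (the entry in position $1$ advances by $k$ under $C^k$) is precisely what closes that gap.

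The one issue is your identification of the action, and it is worth spelling out because under your own stated convention it makes your central claim geometrically false. The paper's operation (Fig.~\ref{t4}: ``the upper leaf is successively rotated to the bottom'') is, when $p$ lists the labels from top to bottom, the rotation of the one-line word $(p_1p_2\cdots p_n)\mapsto(p_2\cdots p_np_1)$, i.e.\ right composition $p\mapsto p\circ C$; your map is left composition, the value shift $p(i)\mapsto p(i)+1 \pmod n$. These are different bijections with different orbits, and with the labels-from-top-to-bottom reading the value shift does \emph{not} preserve the set of geometric foldings: $(1243)$ is a folding, but shifting every label gives $(2314)$, whose stack (labels $2,3,1,4$ from top to bottom) puts the perforations $1$--$2$ (heights $1,3$) and $3$--$4$ (heights $2,4$) on the same side, where they cross. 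Your closure step survives only because it is anchored to Lemma~\ref{condition_folding} as literally stated, and that statement of Koehler's criterion is really the characterization in the \emph{inverse} convention, where $p(i)$ is the position of stamp $i$ (with the labels-listing convention the criterion as written wrongly rejects the genuine folding $(1342)$, via $i=3$, $j=1$). In the position-listing convention your left composition is exactly the physical rolling of the figure (bottom leaf to top), ``leaf $1$ on top'' is still the condition $p(1)=1$, and every step of your argument is correct --- only the word ``relabeling'' is misleading. So the proof does deliver \eqref{tr_n}, but you should fix the convention to the one that makes Lemma~\ref{condition_folding} true; if you insist on reading $p$ as the stack listing, replace your action by $p\mapsto p\circ C$, for which freeness and the transversal argument go through verbatim (the position of label $1$, rather than the top value, then cycles through all $n$ possibilities).
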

\begin{proof}
Each row of Fig. \ref{t4} gives a visual argument that if $f$ is a folding then so is its image $f^C$ under the circular permutation $C$ (a more rigorous argument is provided by Lemma \ref{condition_folding}). As $C^{n} = 1$, each orbit contains $n$ elements. Moreover, each orbit contains exactly one folding with leaf 1 on top, and these foldings are distinct.
\end{proof}

\begin{figure}[ht]
\begin{center}
\includegraphics[scale=0.6]{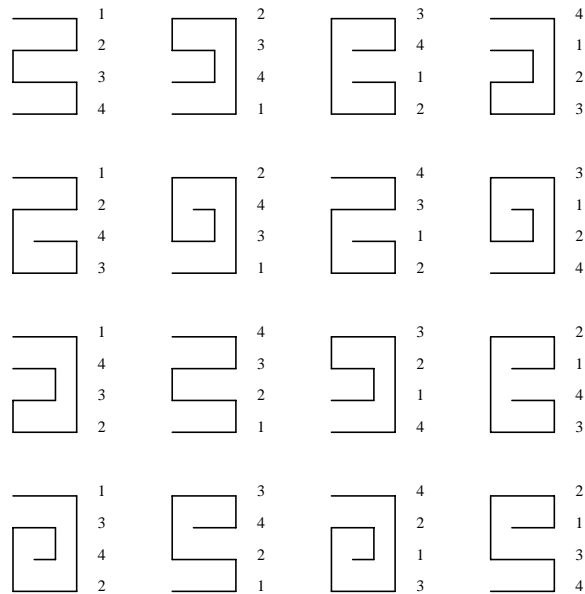}
\caption{The 16 foldings of 4 labeled stamps constituting the set $\mathcal{T}_4$. The 4 foldings in each row form an orbit under the circular permutation: the upper leaf is successively `rotated' to the bottom. The foldings in the first column with leaf 1 on top are representatives of the orbits, constituting the set $\mathcal{R}_4$.}
\label{t4}
\end{center}
\end{figure}

According to Proposition \ref{t_n}, $\mathcal{T}_n$ is stable under the circular permutation. Lemma \ref{condition_folding} shows that this is also the case of its complement $S_n - \mathcal{T}_n$: if $p$ is a permutation that is not a folding then $p^C$ is not a folding either. 

The set $\mathcal{R}_n$ can be constructed inductively using a tree $n$ levels deep (Fig. \ref{tree_r}). The root of the tree (level 1) is the folding constituted of a single leaf. Level $n$ of the tree describes $\mathcal{R}_n$ and comprises $r(n)$ nodes. To each folding (node) $f \in \mathcal{R}_n$ at level $n$, a leaf labeled $n+1$ is appended to leaf $n$. The new leaf is inserted in any possible position with the constraint that leaf 1 must stay on top, leading to all descendant nodes of $f$. 

As apparent in Fig. \ref{tree_r}, below level 2, the tree is made of two topologically identical subtrees. The reason is that if $(1 ab \cdots yz)$ is a folding then so is the reversed $(1 zy \cdots ba)$. Each folding $(1 \cdots 2 \cdots 3 \cdots)$ in the left subtree is uniquely associated with a folding $(1 \cdots 3 \cdots 2 \cdots)$ in the right subtree. Hence, for $n > 2$, $r(n)$ is even.

\begin{figure}[ht]
\begin{center}
\includegraphics[scale=0.6]{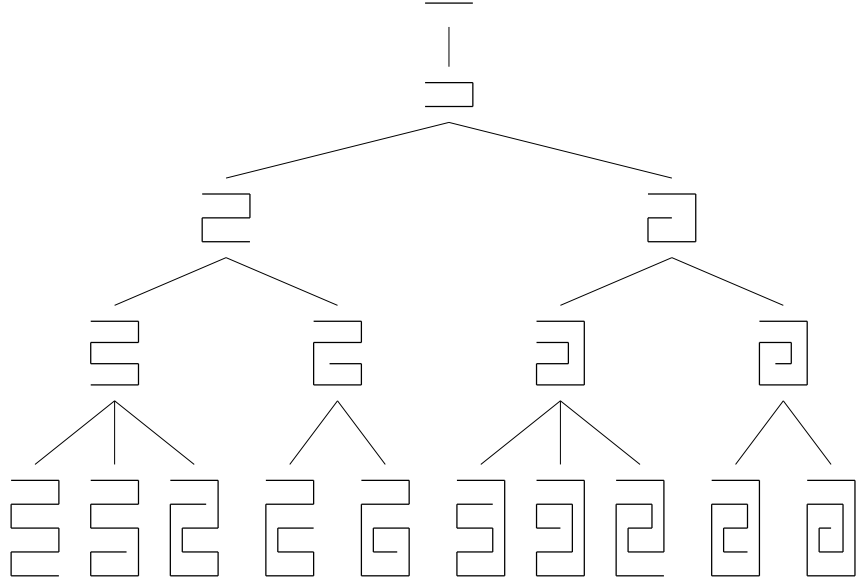}
\caption{The tree describing the sets $\mathcal{R}_n$ for $n = 1,\ldots ,5$. At a given level of the tree, each folding in the left subtree can be paired uniquely with a folding in the right subtree by keeping leaf 1 in place and `rotating' the leaves around leaf 2.}
\label{tree_r}
\end{center}
\end{figure}

The algorithm of Lunnon \cite{lunnon1968} lists the foldings $(1 \cdots 2 \cdots 3 \cdots)$ using a depth first search. The algorithm explicitly looks for all positions where to insert the new leaf, keeping leaf 1 on top. 

\section{Blank stamps}\label{blank_stamps}
When the stamps are not labeled (blank stamps), only the distinct shapes of the foldings are considered. While the foldings of $\mathcal{T}_n$ may share identical shapes (but with different corresponding permutations, Fig. \ref{t4}), the set $\mathcal{R}_n$ is constituted of distinct shapes, but with the restriction that leaf 1 is on top.

Two maps on the set of permutations $S_n$ will be important for the study of foldings and meander shapes, the reverse map $r$ and the complement map $c$. For  $p \in S_{n}$, $r$ reverses the entries of $p$: $p^r(i) = p(n+1-i)$ for $i=1, \ldots ,n$. This corresponds to the top-bottom symmetry of a folding. Hence, if $f$ is a folding, $f^r$ is a folding with the same shape as $f$, but top-bottom reflected. The complement map $c$ is defined by $p^c(i) = n+1 - p(i)$ for $i=1, \ldots ,n$, and corresponds to label reversal. If $f$ is a folding, $f^c$ is a folding with the same shape as $f$, but traversed in the reverse order $n, \ldots, 1$ of the labels.

The reverse and complement maps $r$ and $c$ verify $r^2=c^2=1$, $rc=cr$, so that they generate a group of 4 elements operating on $S_n$, $G = \{1,r,c,rc\}$. If $x$ is a map on $S_n$ and $f^x$ has the same shape as $f$, it can be shown that $x \in G$. Thus, the folding shapes are the equivalence classes of $\mathcal{T}_n$ under the action of $G$. We adopt the following definition (Egge \cite{egge2007}):
\begin{definition}\label{symmetry}
A permutation is \textit{symmetric} if it is invariant under the reverse-complement map $rc$.
\end{definition} 
Thus, $p$ symmetric is equivalent to $p(i) = n+1-p(n+1-i)$, $i=1, \ldots ,n$. Let us check that a folding $f$ that is symmetric as a permutation has a (top-bottom) symmetric shape. Indeed, if $f^{rc}=f$, we have $f^c=f^r$. This means that the reversed shape $f^r$ is identical to the (not reversed) shape $f^c$ obtained by traversing the folding in the reverse order of the labels. Since $f^c$ has the same shape as $f$, the shape of $f$ is symmetric. For $n=2p+1$ odd, a symmetric folding $f$ has leaf $p+1$ in central position, $f(p+1) = p+1$.

\begin{proposition}[Koehler \cite{koehler1968}]\label{blank}
Let $b(n)$ denote the number of foldings of $n$ blank stamps, $z(n)$ the number of symmetric foldings of $n$ labeled stamps. Then, for $n>1$,
\begin{equation}\label{b_n}
b(n) = \frac{t(n)+z(n)}{4}.
\end{equation}
Moreover, $z(n)$ is even, and for $n=2p$ even,
\begin{equation}\label{z_2p}
z(2p) = 2r(p+1).
\end{equation} 
\end{proposition}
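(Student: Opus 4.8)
The plan is to treat the three assertions in turn, the first two being quick consequences of the action of $G=\{1,r,c,rc\}$ on $\mathcal{T}_n$ and the third requiring a genuine bijection. For the identity $b(n)=(t(n)+z(n))/4$, I would invoke Burnside's lemma for the action of $G$ on $\mathcal{T}_n$: since the folding shapes are exactly the $G$-orbits, $b(n)=\frac14\sum_{g\in G}|\mathrm{Fix}(g)|$. The identity contributes $|\mathrm{Fix}(1)|=t(n)$ and the reverse--complement contributes $|\mathrm{Fix}(rc)|=z(n)$ by Definition \ref{symmetry}. The crux is that $r$ and $c$ act without fixed points once $n>1$: a folding with $f^r=f$ would satisfy $f(i)=f(n+1-i)$ for all $i$, forcing $i=n+1-i$ by injectivity, which is impossible for $n>1$; likewise $f^c=f$ forces $2f(i)=n+1$ for all $i$. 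Hence $|\mathrm{Fix}(r)|=|\mathrm{Fix}(c)|=0$ and the formula follows. This also explains the hypothesis $n>1$, since for $n=1$ both maps fix the unique permutation and the count degenerates.

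For the parity of $z(n)$, I would show that $r$ restricts to a fixed-point-free involution on the set $Z_n$ of symmetric foldings. First $r$ preserves $Z_n$: from $f^{rc}=f$ one gets $f^r=f^c$ (as already noted in the text), whence $(f^r)^{rc}=f^c=f^r$ using $r^2=1$ and $rc=cr$, so $f^r$ is again symmetric. Since $r^2=1$ and $r$ has no fixed points by the previous paragraph, $Z_n$ is partitioned into two-element orbits $\{f,f^r\}$, and therefore $z(n)=|Z_n|$ is even.

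Finally, for $z(2p)=2r(p+1)$ I would produce a bijection between the $r$-orbits $\{f,f^r\}$ of symmetric $2p$-foldings and the set $\mathcal{R}_{p+1}$; combined with the preceding paragraph this yields $z(2p)/2=r(p+1)$. The idea is that a symmetric $2p$-folding is a mirror-symmetric stack whose symmetry plane lies between stack-positions $p$ and $p+1$ and fixes the central crease joining labels $p$ and $p+1$ (symmetry forces $f^{-1}(p+1)=2p+1-f^{-1}(p)$). Folding the stack flat along this plane collapses the two mirror-image halves onto one another, turning the half-strip carrying labels $1,\dots,p$ together with its anchored central crease into a folding of $p+1$ leaves; regarding the anchored crease as a new leaf seated on top against the reflection wall, and relabelling accordingly, identifies this folded half with an element of $\mathcal{R}_{p+1}$. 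The two members $f,f^r$ of an orbit differ only by the top--bottom flip that the folding-in-half forgets, so they produce the same image, giving a well-defined map on orbits whose inverse reflects any $g\in\mathcal{R}_{p+1}$ across the added wall.

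The main obstacle is exactly this last correspondence: making precise how the central crease becomes the $(p+1)$-st leaf, and proving that the non-crossing (folding) property is preserved in both directions. The cleanest route is probably to phrase the map directly on permutations and verify through Koehler's circular-order criterion (Lemma \ref{condition_folding}) that the crossing condition for the $2p$-folding is equivalent, half by half, to that for the folded $(p+1)$-image, rather than to argue geometrically. The small cases $z(2)=2=2r(2)$ and $z(4)=4=2r(3)$ serve as reassuring checks that the normalisation and the factor $2$ are accounted for correctly.
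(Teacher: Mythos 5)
Your proposal is correct and follows essentially the same route as the paper: the formula for $b(n)$ and the parity of $z(n)$ come from counting $G$-orbits of size 4 and 2 (your Burnside computation, with $\mathrm{Fix}(r)=\mathrm{Fix}(c)=\emptyset$ for $n>1$, is exactly the paper's orbit count made explicit), and your fold-in-half construction for $z(2p)=2r(p+1)$ is the inverse of the paper's map, which concatenates two copies of a folding of $\mathcal{R}_{p+1}$ and discards two stamps (Fig. \ref{manip_fold}). The verification that this correspondence preserves the folding property is left at the level of a picture in the paper as well, so the remaining work you flag is no more than what the paper itself omits.
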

\begin{proof}
We have seen that for $f \in \mathcal{T}_n$, $f^x$ is a folding with the same shape as $f$ if and only if $x \in G$. In $\mathcal{T}_n$, there are either 4 distinct foldings with the same shape ($f$, $f^r$, $f^c$, and $f^{rc}$) or only 2 distinct foldings with the same shape ($f$ and $f^r=f^c$) when $f$ is symmetric (Fig. \ref{fold_meand_rc}). Consequently, the number of distinct shapes is 
\begin{displaymath}
b(n) = \frac{t(n)-z(n)}{4} + \frac{z(n)}{2} = \frac{t(n)+z(n)}{4},
\end{displaymath}
and we also know that $z(n)$ is even.

Relation (\ref{z_2p}) comes from the fact that for $n=2p$, any symmetric $2p$-folding is uniquely associated with a $(p+1)$-folding of $\mathcal{R}_{p+1}$ (Fig. \ref{manip_fold}).
\end{proof}

\begin{figure}[ht]
\begin{center}
\includegraphics[scale=0.6]{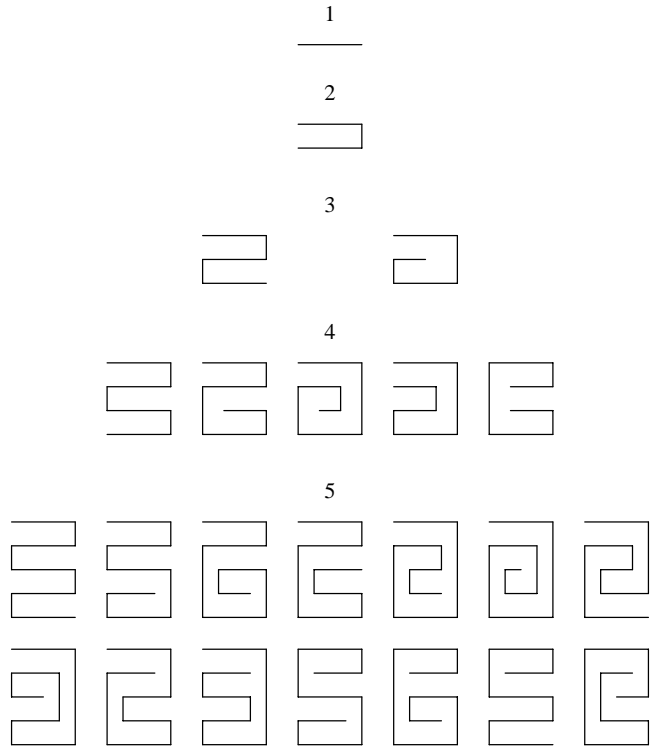}
\caption{The foldings of $n$ blank stamps for $n = 1, \ldots, 5$.}
\label{b_1_5}
\end{center}
\end{figure}

From the proof of Proposition \ref{blank}, we understand that $z(n)/2$ is the number of symmetric foldings of $n$ blank stamps. For $n=2p$ even, the shapes of the symmetric foldings are 2-copies of those of $\mathcal{R}_{p+1}$, in number
\begin{displaymath}
\frac{z(2p)}{2} = r(p+1).
\end{displaymath}

\begin{figure}[t]
\begin{center}
\includegraphics[scale=0.6]{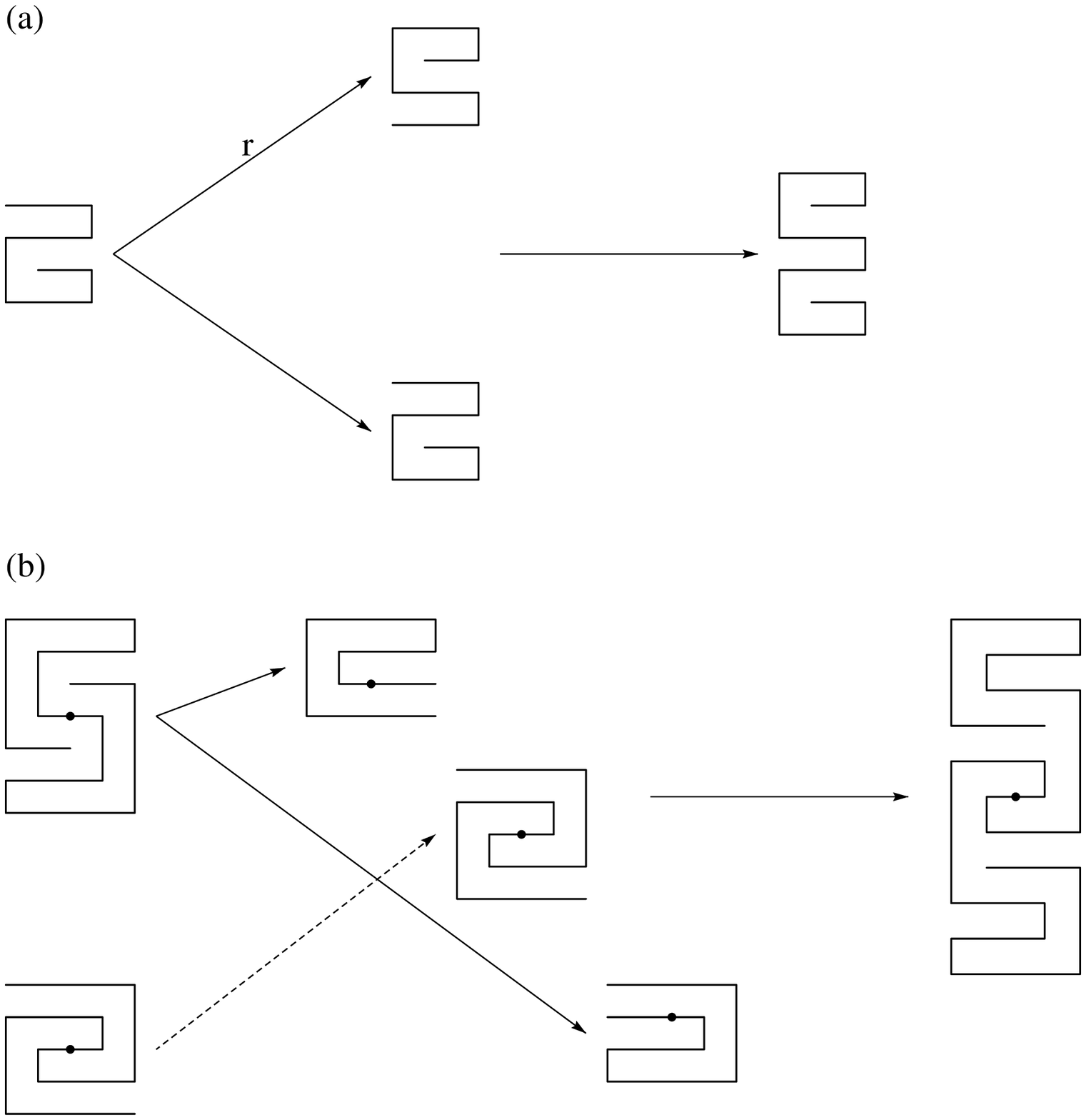}
\caption{(a) For $n=2p$ even, any symmetric folding of $n$ stamps can be associated uniquely with a folding $(1 \cdots )$ of $p+1$ labeled stamps by concatening 2 copies and discarding 2 stamps. (b) A symmetric $(2p+1)$-folding and a symmetric $(2q+1)$-folding produce a symmetric $(2n+1)$-folding with $n=p+q$.}
\label{manip_fold}
\end{center}
\end{figure}

For $n=2p+1$ odd, the shapes of the symmetric foldings are more intricate. The number of symmetric foldings of $2p+1$ blank stamps is
\begin{displaymath}
k(p) = \frac{z(2p+1)}{2}.
\end{displaymath}

The corresponding set can be constructed inductively using a tree (Fig. \ref{tree_sym_odd}), as for the set $\mathcal{R}_n$. At each stage of the process, 2 leaves are appended to the free ends of the folding. The new leaves are inserted symmetrically in any possible position.

\begin{figure}[ht]
\begin{center}
\includegraphics[scale=0.6]{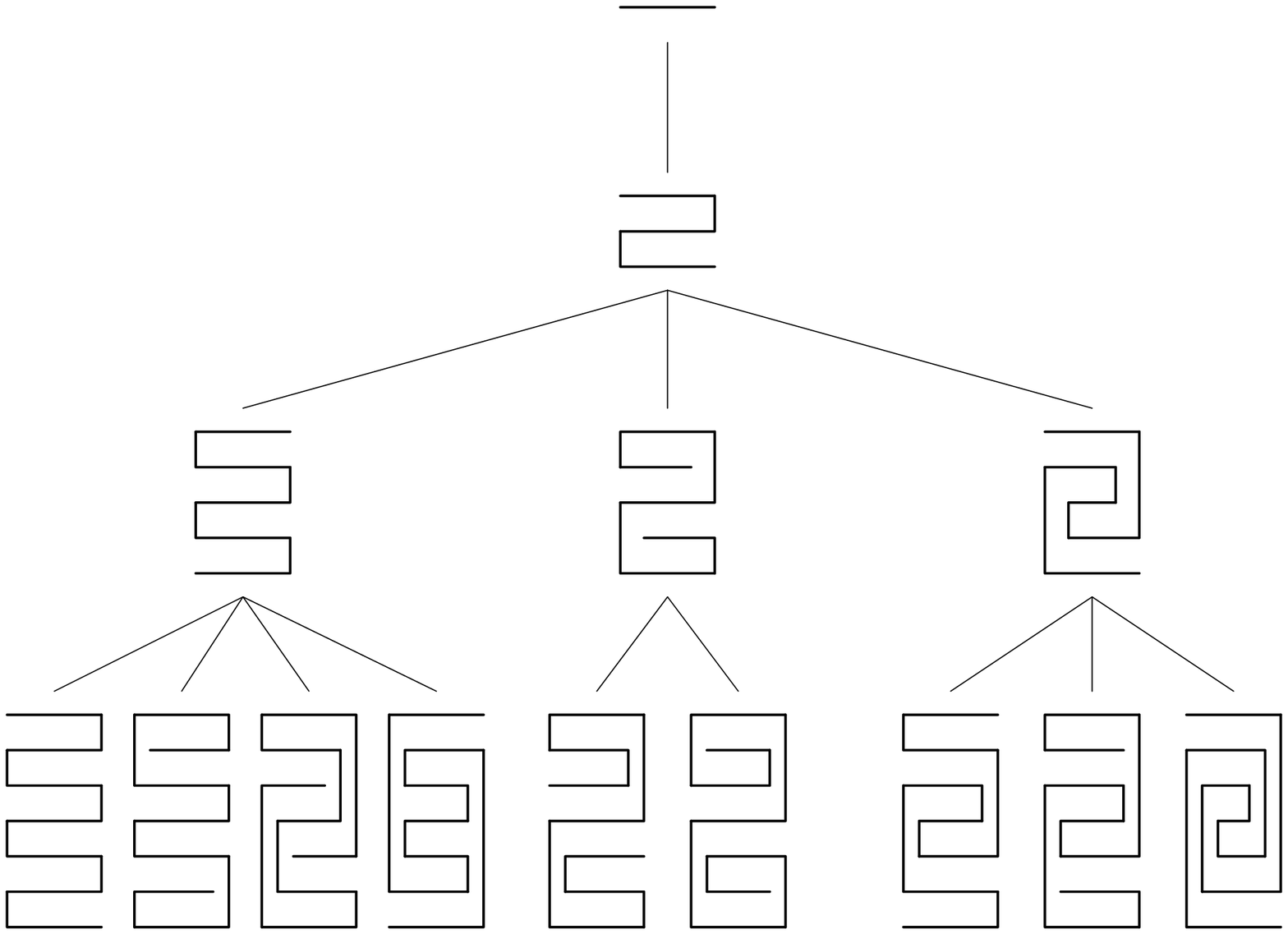}
\caption{The tree describing the symmetric foldings of $2p+1$ stamps, $p = 0, \ldots, 3$.}
\label{tree_sym_odd}
\end{center}
\end{figure}

\section{Leaf in - leaf out}\label{in-out}
We have seen that $\mathcal{T}_n$ is stable under the action of the circular permutation $C$. This suggests to consider the set $S_n$ of permutations on $n$ symbols generated by cyclic shift, containing $\mathcal{T}_n$ as a subset. The cyclic shift is a map generating $S_i$ from $S_{i-1}$ by adding a symbol to the left of a permutation of $S_{i-1}$ and circularly permuting the symbols \cite{knuth2005} \cite{legendre2011}.
\begin{definition}\label{def_in_out}
In $\mathcal{T}_n$ we distinguish the foldings with leaf $n$ \textit{out}, where the leaf points directly to the exterior (e.g., the second folding in Fig. \ref{t4}), forming  $\mathcal{T}^o_n$, and the foldings with leaf $n$ \textit{in}, where the leaf is in inner position (e.g., the third folding in Fig. \ref{t4}), forming  $\mathcal{T}^i_n$.
\end{definition} 
\begin{proposition}\label{To_Ti}
Let $t^o(n)$ denote the number of foldings of $\mathcal{T}_n$ with leaf $n$ out, $t^i(n)$ the number of foldings with leaf $n$ in, then
\begin{equation}\label{toi}
t(n) = t^o(n) + t^i(n) = nt^o(n-1). 
\end{equation}
\end{proposition}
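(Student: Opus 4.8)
The plan is to treat the two equalities separately. The first, $t(n)=t^o(n)+t^i(n)$, is immediate: by Definition \ref{def_in_out} every folding of $\mathcal{T}_n$ has leaf $n$ either out or in, and these two alternatives are mutually exclusive and exhaustive, so $\mathcal{T}_n=\mathcal{T}^o_n\sqcup\mathcal{T}^i_n$ and the counts add. For the second equality $t(n)=n\,t^o(n-1)$ I would \emph{not} try to count extensions of leaf $n$ directly (a leaf-out folding of $n-1$ stamps does not admit a fixed number of valid insertions of a new leaf), but instead invoke Proposition \ref{t_n}, which gives $t(n)=n\,r(n)$. The whole statement then reduces to proving $r(n)=t^o(n-1)$, that is, that the number of $C$-orbits in $\mathcal{T}_n$ equals the number of $(n-1)$-foldings whose last leaf is out.

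To prove $r(n)=t^o(n-1)$ I would set up a bijection by \emph{deleting the top endpoint}. By the reasoning of Proposition \ref{t_n}, since $C$ relabels $\ell\mapsto\ell+1$ cyclically, the label on top runs through all of $1,\ldots,n$ along an orbit; hence each $C$-orbit contains exactly one folding with leaf $n$ on top, giving a set of representatives of cardinality $r(n)$. The map sends such a representative $f$ to the folding $f'$ of $\{1,\ldots,n-1\}$ obtained by erasing leaf $n$: as leaf $n$ is an endpoint of the strip, $f'$ is again a folding and leaf $n-1$ becomes its new free endpoint. I would then show that leaf $n-1$ is \emph{out} in $f'$, so $f'\in\mathcal{T}^o_{n-1}$. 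The inverse takes $g\in\mathcal{T}^o_{n-1}$ and attaches a new leaf $n$ to the exposed free end of leaf $n-1$, routing it to the very top of the stack; because that end faces the exterior, this placement is both possible and unique and returns a folding with leaf $n$ on top. (Alternatively one may delete leaf $1$ from the representatives $\mathcal{R}_n$ of Proposition \ref{t_n}, landing in the foldings with leaf $1$ out, which are equinumerous with $\mathcal{T}^o_{n-1}$ through the shape-preserving complement map $c$ of Definition \ref{symmetry}, since $c$ interchanges the labels $1$ and $n-1$.)

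The hard part is verifying that deletion really lands in $\mathcal{T}^o_{n-1}$ and that attachment is well defined, i.e. the geometric claim: \emph{if an endpoint leaf sits on top of the stack, then after its removal the neighbouring endpoint faces the exterior.} I would argue this from the non-crossing of perforations encoded in Lemma \ref{condition_folding}. Using the left/right description of perforations from that lemma's proof, the perforation joining the top leaf $n$ to leaf $n-1$ is an arc running from the topmost level down to the level of leaf $n-1$ on one fixed side. If the freed end of leaf $n-1$ were enclosed after deletion, there would be another perforation arc on that same side joining a level strictly above leaf $n-1$ to one strictly below it; together with the arc to the topmost leaf these two arcs would interleave, contradicting Lemma \ref{condition_folding}. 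Hence the freed end reaches the exterior and leaf $n-1$ is out, which is exactly the condition making the inverse attachment to the top valid and unique. Combining the resulting bijection $r(n)=t^o(n-1)$ with $t(n)=n\,r(n)$ yields $t(n)=n\,t^o(n-1)$, completing the proof.
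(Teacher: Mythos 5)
Your proof is correct, but it is organized differently from the paper's. The paper proves $t(n)=n\,t^o(n-1)$ directly, without invoking Proposition \ref{t_n}: it uses the cyclic shift, which generates each permutation of $S_n$ exactly once from a permutation of $S_{n-1}$. A folding $f\in\mathcal{T}^o_{n-1}$ spawns $n$ foldings (append leaf $n$ at the bottom --- possible precisely because leaf $n-1$ is out --- and take all circular permutations of the result), while a folding of $\mathcal{T}^i_{n-1}$ spawns none (appending at the bottom creates a crossing, and the set of non-foldings is stable under the circular permutation, as noted after Proposition \ref{t_n}); hence $t(n)=n\,t^o(n-1)$, and the identity $t^o(n)=r(n+1)$ is then \emph{deduced} from (\ref{tr_n}). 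You run the deduction in the opposite direction: you first prove $r(n)=t^o(n-1)$ by an explicit bijection --- each orbit's unique representative with leaf $n$ on top, minus its top leaf --- and then multiply by $n$ using (\ref{tr_n}). Both arguments turn on the same geometric fact, namely that a new end leaf can be attached at an extreme position of the stack if and only if the neighbouring end leaf is out; you prove this carefully from the non-crossing condition of Lemma \ref{condition_folding}, whereas the paper merely asserts it in the ``append at the bottom'' form. What the paper's arrangement buys is exactly the bookkeeping it needs later: its proof shows that the non-foldings of $S_n$ are the permutations generated by cyclic shift from leaf-in foldings (and non-foldings) of lower order, which is what yields formula (\ref{propfold}). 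What your arrangement buys is a self-contained bijective proof of $r(n)=t^o(n-1)$ that needs neither the stability of non-foldings under the circular permutation nor the fact that cyclic shift exhausts $S_n$.
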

\begin{proof}
To a folding $f \in \mathcal{T}^o_{n-1}$, since leaf $n-1$ points to the exterior, a leaf $n$ can be appended to this leaf at the bottom of the folding, producing an $n$-folding $f'$. Circular permutations of $f'$ are also foldings, so that from $f$ the cyclic shift generates $n$ foldings. By contrast, if $f \in \mathcal{T}^i_{n-1}$, the  permutations generated from $f$ by the cyclic shift are not foldings, because appending leaf $n$ at the bottom of the folding produces a crossing. As the cyclic shift generates all permutations of $S_n$ from $S_{n-1}$, we obtain all foldings of $\mathcal{T}_n$ from $\mathcal{T}^o_{n-1}$. This gives the second equality in (\ref{toi}). Moreover, from (\ref{tr_n}),
\begin{displaymath}
t^o(n) = r(n+1).   
\end{displaymath}
\end{proof}
From (\ref{toi}), we have
\begin{displaymath}
t(n) = n\left(t(n-1) - t^i(n-1)\right). 
\end{displaymath}
Expanding the recurrence, and using $t(1)=1$, we obtain
\begin{displaymath}
t(n) = n! - \sum_{j=0}^{n-2} n(n-1) \cdots (n-j) t^i(n-j-1).   
\end{displaymath}
The sum can be interpreted as the number of $n$-permutations that are not foldings. Indeed, from any folding in $\mathcal{T}^i_{n-j-1} \subset S_{n-j-1}$, successive applications of the cyclic shift generate $n(n-1) \cdots (n-j)$ permutations of $S_n$ that are not foldings. Hence, all permutations of $S_n$ that are not foldings are generated by cyclic shift from the foldings of $S_3, \ldots, S_{n-1}$ with the last leaf in (noting that $t^i(1) = t^i(2) = 0$, Table \ref{table_io}). The formula can be rewritten 
\begin{equation}\label{propfold}
\frac{t(n)}{n!} = 1 - \sum_{k=3}^{n-1} \frac{t^i(k)}{k!}.   
\end{equation}

Among the elements of $\mathcal{T}^o_{n}$, we have those with leaf 1 out, forming $\mathcal{T}^{oo}_{n}$, and those with leaf 1 in, forming $\mathcal{T}^{io}_{n}$. In the same way, among the elements of $\mathcal{T}^i_{n}$, we have those with leaf 1 out, forming $\mathcal{T}^{oi}_{n}$, and those with leaf 1 in, forming $\mathcal{T}^{ii}_{n}$. Moreover, the sets $\mathcal{T}^{io}_{n}$ and $\mathcal{T}^{oi}_{n}$ are in bijective correspondance under the complement map $c$. With obvious notations,
\begin{equation}\label{tooii}
t(n) = t^{oo}(n) + 2t^{io}(n) + t^{ii}(n).
\end{equation} 

\begin{table}[ht]
\footnotesize
\begin{center}
\begin{tabular}{| l | r r r r r r r r r r r r r r |}
\hline
$n$             &1 &2 &3 &4  &5  &6  &7   &8   &9    &10   &11    &12     &13     &14\\
\hline
$t^o$           &1 &2 &4 &10 &24 &66 &174 &504 &1406 &4210 &12198 &37378  &111278 &346846
\\\hline
$t^{oo}$        &1 &2 &2 &6  &8  &28 &42  &162 &262  &1076 &1828  &7852   &13820 &61388\\
$t^{io}=t^{oi}$ &0 &0 &2 &4  &16 &38 &132 &342 &1144 &3134 &10370 &29526  &97458 &285458\\
$t^{ii}$        &0 &0 &0 &2  &10 &40 &156 &546 &1986 &6716 &23742 &79472  &277178 &925588\\\hline
$t^i$           &0 &0 &2 &6  &26 &78 &288 &888 &3130 &9850 &34112 &108998 &374636 &1211046\\
\hline
\end{tabular}
\end{center}
\caption{The number of foldings of $n$ labeled stamps with leaf $1$, $n$ in or  out.}
\label{table_io}
\end{table}

\section{Meanders}\label{meanders}
A meander of order $n$ is a simple oriented curve in the plane transversally intersecting a line at $n$ points. The line can be considered as a road traveling from W to E, and the meander as a river flowing from SW to E (NE for odd $n$, SE for even $n$), and going through $n$ bridges across the road (Fig. \ref{m_1_5}). The number of meanders of order $n$ is denoted $m(n)$. 

\begin{figure}[ht]
\begin{center}
\includegraphics[scale=0.75]{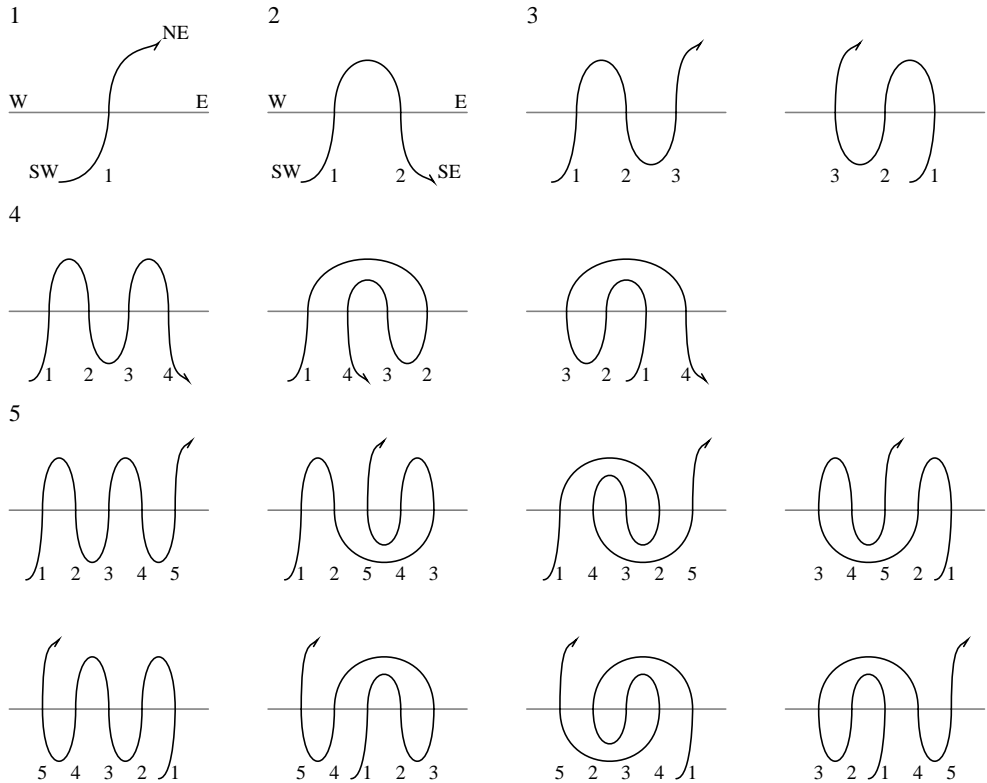}
\caption{Meanders through $n = 1, \ldots, 5$ bridges.}
\label{m_1_5}
\end{center}
\end{figure}                  

Joining the ends of a meander of odd order $2n-1$ creates a closed meander of order $2n$. This operation establishes a one-to-one correspondance between meanders of order $2n-1$ and closed meanders of order $2n$. Hence, the number $M(n)$ of closed meanders through $2n$ bridges verifies
\begin{displaymath}
M(n) = m(2n-1).
\end{displaymath} 

Any meander can be viewed as a folding but the converse is not true. The first folding in the second row of Fig. \ref{t4} is not a meander because leaf $n$ points inward. A \textit{semi-meander} is a generalization of a meander where one of the end points is allowed to be wound inside the river. Semi-meanders are in one-to-one correspondance with foldings $(1 \cdots)$ of $n+1$ labeled stamps, in number $r(n+1)$ \cite{DiFrancisco&al1996}. If both ends of the river are allowed to wound up inside itself, then we obtain a stamp folding \cite{sawada&li2012}. In fact, the next proposition shows that the set of meanders of order $n$ is almost identical to the set $\mathcal{T}^{oo}_n$ of stamp foldings whose both ends are out. The slight difference comes from the fact that a meander is oriented while a folding is not.

Like for foldings, permutations are associated with meanders (by listing the bridges). A permutation representing a meander is also called a meander.

\begin{proposition}\label{moo}
The number of meanders is
\begin{eqnarray}
\label{oo_odd}   m(2p-1) &=& t^{oo}(2p-1),\\
\label{oo_even}  m(2p)   &=& \frac{t^{oo}(2p)}{2},
\end{eqnarray}
where $t^{oo}(n)$ is the number of $n$-foldings with both ends out. Moreover,
\begin{equation}\label{m_ro}   
m(n) = r^{o}(n+1),
\end{equation}
where $r^{o}(n+1)$ is the number of $(n+1)$-foldings $(1 \cdots)$ with leaf $n+1$ out.
\end{proposition}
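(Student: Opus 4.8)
The plan is to realise a meander of order $n$ as a stamp folding in $\mathcal{T}^{oo}_n$ carrying an orientation, and then to measure how much information the orientation adds. First I would unfold the river onto a straight strip: the $n$ crossings become the $n$ stamps, the $n-1$ arcs joining consecutive crossings become the $n-1$ perforations (the arcs on one side of the road becoming the left perforations and those on the other side the right ones, in the sense of Lemma~\ref{condition_folding}), and the two unbounded rays of the river become the two free ends of the strip. The flow of the river orients the strip, i.e.\ it fixes the labelling $1,\dots,n$, and reversing the flow is exactly the complement map $c$. Since both rays run off to infinity, neither end of the strip is wound inside the figure, so the folding lies in $\mathcal{T}^{oo}_n$; conversely an oriented folding in $\mathcal{T}^{oo}_n$ is planar as a meander precisely when its two rays escape without meeting an arc, and the condition that leaves $1$ and $n$ are out guarantees exactly that each of the two rays escapes. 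Thus $\Phi\colon f\mapsto(\text{its oriented meander})$ is the map to analyse.

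The decisive feature is a parity dichotomy, and this is where I expect the real work. Reading the crossings in flow order, the river is below the road after an even number of them and above after an odd number; hence for $n=2p-1$ the exit ray lies above the road while the entry ray lies below, so the two rays sit in different half-planes and impose no joint constraint. Every orientation of every $f\in\mathcal{T}^{oo}_n$ is then automatically planar, $\Phi$ is a bijection, and \eqref{oo_odd} follows. For $n=2p$ both rays lie below the road; planarity now additionally requires the entry ray (running to $-\infty$) and the exit ray (running to $+\infty$) not to cross, a single left--right constraint. The hard part is to show that, for each pair $\{f,f^c\}$, this constraint is met by exactly one of the two opposite orientations: reversing the flow interchanges the roles of the first and last crossings and of the two infinite directions, and I would prove that it turns a crossing placement of the two rays into a non-crossing one and vice versa, so that neither both nor none of $f,f^c$ survives. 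Granting this, $c$ is a fixed-point-free involution on $\mathcal{T}^{oo}_n$ pairing each meander with its reverse, and \eqref{oo_even} follows.

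Finally I would obtain \eqref{m_ro} through semi-meanders rather than through $\mathcal{T}^{oo}_n$. The semi-meanders of order $n$ are in bijection with the foldings of $\mathcal{R}_{n+1}$, the outgoing source corresponding to leaf $1$ (on top, hence out) and the windable end to leaf $n+1$. A semi-meander is an honest meander exactly when its windable end does not spiral inside but also escapes to infinity, that is, when leaf $n+1$ is out; the number of such foldings is by definition $r^{o}(n+1)$, giving $m(n)=r^{o}(n+1)$. Together with \eqref{oo_odd}--\eqref{oo_even} this yields the internal checks $r^{o}(n+1)=t^{oo}(n)$ for $n$ odd and $r^{o}(n+1)=t^{oo}(n)/2$ for $n$ even; I would resist trying to prove these last identities directly by deleting leaf $n+1$, since a leaf that is out but not extremal in the stack may leave leaf $n$ enclosed once removed, so the parity argument above cannot be circumvented.
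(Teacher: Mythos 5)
Your proposal is correct and takes essentially the same route as the paper: meanders are identified with foldings in $\mathcal{T}^{oo}_n$ (bijectively for odd $n$, two-to-one for even $n$ via an involution on $\mathcal{T}^{oo}_{2p}$ exchanging meanders with non-meanders), and \eqref{m_ro} is obtained from the leaf-$1$/semi-meander correspondence restricted to foldings with leaf $n+1$ out. The only difference is immaterial: your pairing involution for even $n$ is the complement map $c$ (flow reversal), where the paper uses the reverse map $r$ (reflection of the road); both work, since, as the paper itself observes in Section \ref{meander_shapes}, for $n$ even neither $g^r$ nor $g^c$ of a meander $g$ is a meander while both remain foldings with both ends out.
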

\begin{proof}
For $n = 2p-1$ odd, there is a bijective correspondance between meanders and foldings with both ends out. For $n = 2p$ even, the image $g^r$ of a meander $g$ under the reverse map $r$ is \textit{not} a meander, because the river does not travel SW-SE in this case, but in the reverse direction (Fig. \ref{fold_meand_rc}). However, $g^r$ is a folding. Hence 2 foldings of $\mathcal{T}^{oo}_n$, $g$ and $g^r$, correspond to a single meander, $g$. 

To show (\ref{m_ro}), let us consider an $(n+1)$-folding $(1 \cdots)$ that has leaf 1 on top and leaf $n+1$ out. If leaf 1 is deleted, leaf 2 is now out, so that both ends are out. The resulting $n$-folding is an $n$-meander.
\end{proof}

\begin{figure}[ht]
\begin{center}
\includegraphics[scale=0.6]{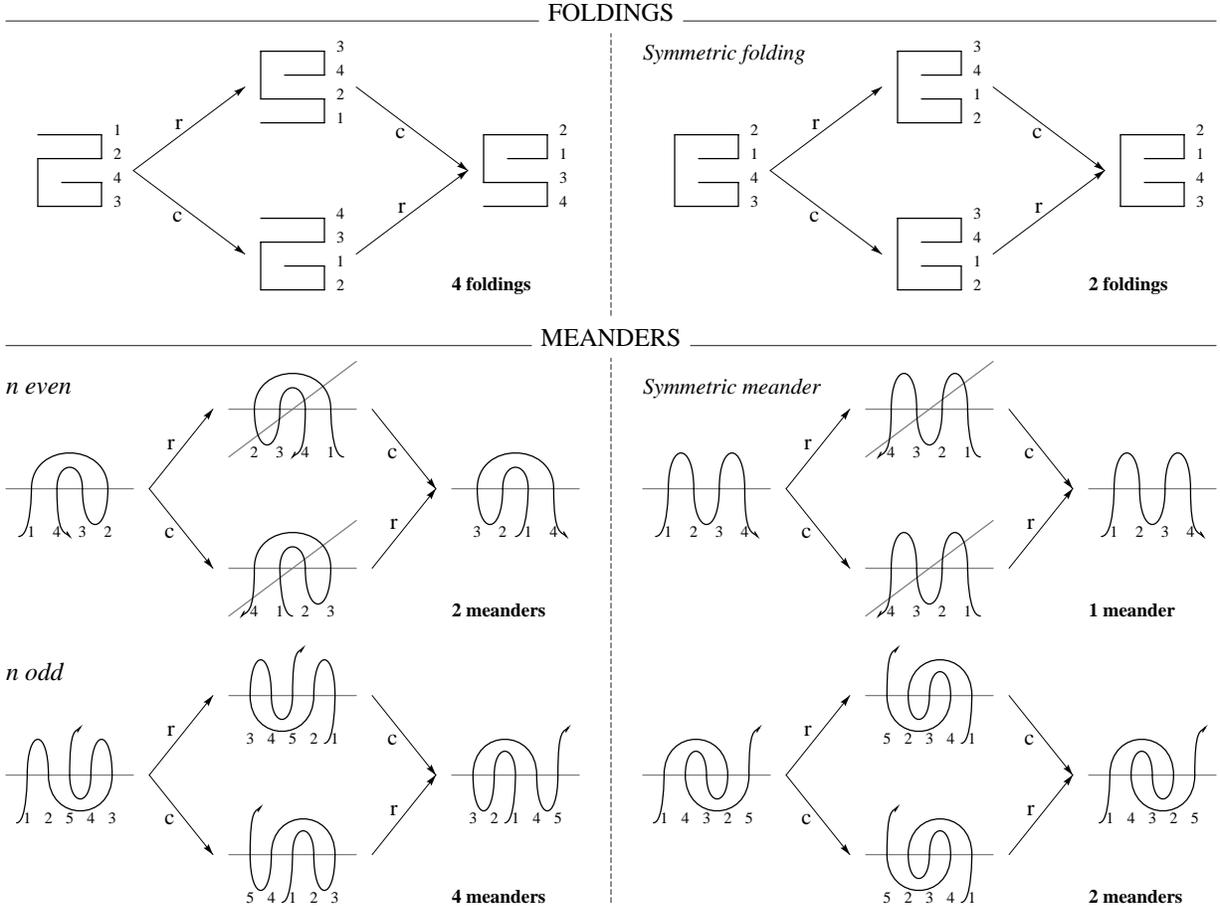}
\caption{The action of the reverse map $r$ and the complement map $c$ on foldings and meanders. In each case, a single shape is obtained.}
\label{fold_meand_rc}
\end{center}
\end{figure}   

\section{Meander shapes}\label{meander_shapes}
In the same way that blank stamps foldings describe the various shapes of labeled stamps foldings, the \textit{meander shapes} are defined as the equivalence classes of meanders under the reverse and complement maps $r$ and $c$. As seen in the previous section, one has to distinguish $n$ odd and $n$ even. Fig. \ref{fold_meand_rc} illustrates the following points. For $n$ even, two meanders are equivalent if one is the image of the other under the reverse-complement map $rc$, a reflection in the Y axis (map $r$) together with a relabeling of the bridges reversing the orientation (map $c$). For $n$ odd, two meanders are equivalent if one is the image of the other under the map $r$, a reflection in the Y axis, or the complement map $c$, an orientation reversal along the X axis. As for foldings, a meander is \textit{symmetric} if it is invariant under the reverse-complement map $rc$.
\begin{proposition}\label{prop_meander_shapes}
The number of meander shapes is
\begin{eqnarray}
\label{a_even}  a(2p) &=& \frac{m(2p) + m(p)}{2},\\
\label{a_odd} a(2p+1) &=& \frac{m(2p+1) + 2k^o(p)}{4},
\end{eqnarray}
where $k^o(p)$ is the number of symmetric $(2p+1)$-folding shapes with both ends out. Moreover,
\begin{equation}\label{a_boo}  
a(n) = b^{oo}(n),
\end{equation}
where $b^{oo}(n)$ is the number of $n$-folding shapes with both ends out.
\end{proposition}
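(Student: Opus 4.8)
The plan is to realise every meander shape as an orbit of the four-element group $G=\{1,r,c,rc\}$ acting on the set $\mathcal{T}^{oo}_n$ of $n$-foldings with both ends out, and then to count those orbits by Burnside's lemma. First I would record that $r$ and $c$ preserve the shape of a folding, and in particular the property of having both ends out (both maps leave the unoriented shape unchanged). Thus $G$ acts on $\mathcal{T}^{oo}_n$, and the folding shapes with both ends out are exactly its orbits, so $b^{oo}(n)=|\mathcal{T}^{oo}_n/G|$.

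Next I would prove the ``moreover'' identity $a(n)=b^{oo}(n)$ by transporting the meander-shape equivalence onto $\mathcal{T}^{oo}_n$ through Proposition \ref{moo}. For $n=2p+1$ odd, \eqref{oo_odd} gives a bijection between meanders and $\mathcal{T}^{oo}_n$, and the meander-shape group is the full $G$, so meander shapes correspond to $G$-orbits on $\mathcal{T}^{oo}_n$. For $n=2p$ even, \eqref{oo_even} identifies meanders with the classes $\{g,g^r\}$ of $\mathcal{T}^{oo}_n$ under $\langle r\rangle$, while the meander-shape equivalence is generated by $rc$ alone; composing these, meander shapes correspond to the orbits of $\langle r,rc\rangle=G$ on $\mathcal{T}^{oo}_n$, since $r\cdot rc=c$. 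In both parities $a(n)=|\mathcal{T}^{oo}_n/G|=b^{oo}(n)$, which is \eqref{a_boo}.

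Then I would run Burnside on $G\curvearrowright\mathcal{T}^{oo}_n$. The decisive simplification is that $r$ and $c$ have no fixed points at all on permutations with $n>1$: $p^r=p$ forces $p(n)=p(1)$, and $p^c=p$ forces $p(i)=(n+1)/2$ for every $i$, both impossible. Hence only $1$ and $rc$ contribute, and with $z^{oo}(n)$ denoting the number of symmetric foldings (Definition \ref{symmetry}) with both ends out, \[ b^{oo}(n)=\tfrac14\bigl(t^{oo}(n)+z^{oo}(n)\bigr). \] Substituting $t^{oo}(2p)=2m(2p)$ and $t^{oo}(2p+1)=m(2p+1)$ from Proposition \ref{moo} reduces everything to evaluating $z^{oo}$. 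In the odd case $z^{oo}(2p+1)=2k^o(p)$ holds essentially by definition, since each symmetric both-ends-out shape carries exactly the two labellings $g$ and $g^r=g^c$ (both symmetric, both with ends out, and distinct because $r$ fixes no permutation); this yields \eqref{a_odd}.

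The main obstacle is the even-case identity $z^{oo}(2p)=2m(p)$. I would obtain it by refining the fold-in-half bijection behind $z(2p)=2r(p+1)$ (eq. \eqref{z_2p}, Fig. \ref{manip_fold}), which matches symmetric $2p$-folding shapes with foldings of $\mathcal{R}_{p+1}$. The delicate point is to track the two free ends of a symmetric $2p$-folding through the cut at its centre of symmetry and to check that the ``both ends out'' condition corresponds precisely to ``leaf $p+1$ out'' in the associated $(p+1)$-folding. Granting this, the bijection restricts to symmetric both-ends-out shapes against foldings of $\mathcal{R}^o_{p+1}$, so $z^{oo}(2p)=2r^o(p+1)=2m(p)$ by \eqref{m_ro}; plugging into the Burnside formula gives $b^{oo}(2p)=\tfrac14\bigl(2m(2p)+2m(p)\bigr)=\tfrac12\bigl(m(2p)+m(p)\bigr)$, which together with $a(2p)=b^{oo}(2p)$ establishes \eqref{a_even} and completes the proof.
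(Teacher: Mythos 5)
Your proof is correct, and it reorganizes the paper's argument rather than repeating it. The paper goes in the opposite direction: it first proves \eqref{a_even} and \eqref{a_odd} by counting equivalence classes of meanders themselves (classes of size $2$ or $1$ under $rc$ when $n$ is even, of size $4$ or $2$ under the full group when $n$ is odd), identifying the symmetric-meander counts as $q(2p)=m(p)$ --- a symmetric $2p$-meander is the concatenation of two $p$-meanders --- and $q(2p+1)=2k^o(p)$; only then does it deduce \eqref{a_boo}, by comparing these formulas with the Burnside-type expressions $b^{oo}(2p+1)=\frac{t^{oo}(2p+1)+2k^o(p)}{4}$ and $b^{oo}(2p)=\frac{t^{oo}(2p)+2r^o(p+1)}{4}$. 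You instead prove \eqref{a_boo} first, as a structural statement (meander shapes are exactly the $G$-orbits on $\mathcal{T}^{oo}_n$, using Proposition \ref{moo} to transport the shape equivalence, with the correct composition argument in the even case), and then a single Burnside computation with the fixed-point-free maps $r$ and $c$ yields both numerical formulas at once. Your version makes $a(n)=b^{oo}(n)$ conceptual rather than a coincidence of two computations, and it collapses the two parity cases into one orbit count; the price is that the even-case fixed-point count $z^{oo}(2p)=2r^o(p+1)=2m(p)$ must be extracted from the fold-in-half bijection behind \eqref{z_2p} restricted to both-ends-out foldings --- the step you flag with ``granting this''. That step is true, and it is exactly the fact the paper itself uses, equally without proof, when it writes $b^{oo}(2p)=\frac{t^{oo}(2p)+2r^o(p+1)}{4}$; so your argument sits at the same level of figure-based rigor as the paper's. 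The only content your route bypasses is the paper's independently interesting identification of symmetric $2p$-meanders with $p$-meanders, which you replace by an appeal to \eqref{m_ro}.
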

\begin{proof}
For $n=2p$ even, if $g$ is a meander, neither $g^r$ nor $g^c$ is a meander (Fig. \ref{fold_meand_rc}). However, $g^{rc}$ is a meander, which is identical to $g$ if and only if $g$ is symmetric. Hence, in each equivalence class, there are either 2 elements, or 1 element if $g$ is symmetric. Let $q(n)$ denote the number of symmetric $n$-meanders. Then,
\begin{displaymath}
a(2p) = \frac{m(2p) - q(2p)}{2} + q(2p) = \frac{m(2p) + q(2p)}{2}.
\end{displaymath}
Symmetric $2p$-meanders are the concatenation of two $p$-meanders, so that $q(2p)=m(p)$, giving (\ref{a_even}). For $n=2p+1$ odd, there are either 4 or 2 elements in each equivalence class, the later case occuring for symmetric meanders. We have
\begin{displaymath}
a(2p+1) = \frac{m(2p+1) - q(2p+1)}{4} + \frac{q(2p+1)}{2} = \frac{m(2p+1) + q(2p+1)}{4}.
\end{displaymath}
Each symmetric folding $f$ of $2p+1$ blank stamps that has both ends out defines exactly two distinct $(2p+1)$-meanders, $f$ and $f^r$ (hence a single meander shape). Thus $q(2p+1)=2k^o(p)$, giving (\ref{a_odd}).

Using Propositions \ref{blank} and \ref{moo} together with (\ref{a_even}) and (\ref{a_odd}), formula (\ref{a_boo}) comes from the following relations:
\begin{displaymath}
b^{oo}(2p+1) = \frac{t^{oo}(2p+1) + 2k^o(p)}{4} = \frac{m(2p+1) + 2k^o(p)}{4} = a(2p+1),
\end{displaymath}
\begin{displaymath}
b^{oo}(2p) = \frac{t^{oo}(2p) + 2r^o(p+1)}{4} = \frac{2m(2p) + 2m(p)}{4} = a(2p).
\end{displaymath}
\end{proof}

\section{Asymptotics}\label{asymptotics}

By Fekete's lemma \cite{vanlint&wilson1992}, if a sequence $\textbf{x}$ verifies
\begin{equation}\label{fekete}
x(p)x(q) \le x(p+q) \mbox{ for all } p,q,
\end{equation}
then the growth rate of the sequence, $\lambda = \lim_{n \rightarrow \infty} x(n)^{\frac{1}{n}}$, exists. 

The sequences $\textbf{r}$ and $\textbf{k}$ verify (\ref{fekete}) and hence have a growth rate. The reason is that any $p$-folding can be concatenated with any $q$-folding to produce a $(p+q)$-folding. This is shown for the symmetric foldings in Fig. \ref{manip_fold}.

It is generally believed \cite{lando&zvonkin1993,DiFrancisco&al1996,bacher1999,jensen2000b}, that folding and meander sequences verify asymptotic relations of the form
\begin{equation}\label{asymptotic}
x(n) \sim K n^{\alpha}\lambda^{n},
\end{equation}
with the exponent $\alpha$, and $K$ a constant. In this case, we have $\frac{x(n+1)}{x(n)} \sim \left(1+\frac{1}{n}\right)^{\alpha} \lambda \sim \lambda$. 

By (\ref{tr_n}), sequences $\textbf{t}$ and $\textbf{r}$ have the same growth rate $\lambda$ (with different exponents $\alpha$). Sequence $\textbf{b}$ also has the growth rate $\lambda$ by Proposition \ref{blank}, and so is most probably the case for the sequence $\textbf{k}$ of symmetric folding shapes (Fig. \ref{growthrate_rbk}). From Table \ref{table_stamps} we get the following estimates:
\begin{displaymath}
 \frac{t(45)}{t(44)} = 3.4385110353517, \quad \frac{b(45)}{b(44)} = 3.4385110353636, \quad \frac{k(25)}{k(24)} = 3.426089.
\end{displaymath}
It is believed that the meandric sequences of odd and even terms of $\textbf{m}$ have the growth rate $\lambda^2$ \cite{DiFrancisco&al1996,jensen2000b}. These sequences involve different constants $K_1$ and $K_2$. In Fig. \ref{growthrate_mako}, approximations of $\frac{K_1}{K_2}\lambda$ and $\frac{K_2}{K_1}\lambda$ are displayed alternatively, and this explains the non decaying oscillations. The same pattern holds for the sequence $\textbf{a}$ of meander shapes by Proposition \ref{prop_meander_shapes}. The sequence $\textbf{k}^o$ of symmetric meander shapes also probably has the growth rate $\lambda$ (Fig. \ref{growthrate_mako}). So it seems that all folding and meander sequences studied here have the same growth rate, conjectured to be $\lambda = 3\frac{1}{2}$ \cite{lunnon1968}. This value is compatible with theoretical bounds \cite{albert&paterson2005} as well as with numerical estimations \cite{jensen2000b}.

\begin{proposition}\label{propii}
For large $n$, most foldings have both ends in. The probability for an $n$-folding to be an $n$-meander is asymptotically 0.
\end{proposition}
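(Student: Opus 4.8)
The plan is to reduce both assertions to the single statement that the fraction of foldings whose last leaf points outward vanishes, namely $t^o(n)/t(n) \to 0$ as $n \to \infty$. This is the natural quantity to isolate: a meander necessarily has leaf $n$ out, and in fact $m(n) = r^o(n+1) \le r(n+1) = t^o(n)$ by (\ref{m_ro}) together with Proposition \ref{To_Ti}; meanwhile ``both ends in'' is, up to the cross term, the complement of ``leaf $n$ out''. So once $t^o(n)/t(n) \to 0$ is established, both halves of the proposition follow immediately.

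First I would rewrite $t^o(n)$ using the second equality of (\ref{toi}). Taking $n \mapsto n+1$ there gives $t(n+1) = (n+1)\,t^o(n)$, so $t^o(n) = t(n+1)/(n+1)$ and therefore
\begin{displaymath}
\frac{t^o(n)}{t(n)} = \frac{1}{n+1}\cdot\frac{t(n+1)}{t(n)}.
\end{displaymath}
Granting the asymptotic form (\ref{asymptotic}) for the sequence $\textbf{t}$, which yields $t(n+1)/t(n) \to \lambda$, the right-hand side behaves like $\lambda/(n+1)$ and hence tends to $0$. This is the heart of the argument.

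Then the two conclusions drop out. For meanders, the chain $m(n) = r^o(n+1) \le r(n+1) = t^o(n)$ gives $m(n)/t(n) \le t^o(n)/t(n) \to 0$, so the probability that a random $n$-folding is an $n$-meander is asymptotically $0$. For the ``both ends in'' claim, I would combine (\ref{tooii}) with the splitting $t^o(n) = t^{oo}(n) + t^{io}(n)$ and the identity $t^{io}(n) = t^{oi}(n) \le t^o(n)$ to obtain $t^{ii}(n) = t(n) - t^{oo}(n) - 2t^{io}(n) \ge t(n) - 2\,t^o(n)$; dividing by $t(n)$ gives $t^{ii}(n)/t(n) \ge 1 - 2\,t^o(n)/t(n) \to 1$, i.e. almost all foldings have both ends in.

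The delicate point — and the step I expect to be the real obstacle — is the justification of $t(n+1)/t(n) \to \lambda$, or even of the weaker conclusion $t(n+1)/t(n) = o(n)$ that is all we actually need. Fekete's lemma (\ref{fekete}) only delivers the root limit $t(n)^{1/n} \to \lambda$, which does not by itself control consecutive ratios; indeed, the trivial tree bound $r(n+1) \le n\,r(n)$ (at most $n$ admissible positions for the new leaf) yields only $t^o(n)/t(n) = r(n+1)/(n\,r(n)) \le 1$, far too weak. So the argument genuinely leans on the believed asymptotic (\ref{asymptotic}), equivalently on the convergence of the folding ratios to $\lambda$. A fully unconditional proof would instead require a direct combinatorial estimate showing that the number of admissible insertion positions for leaf $n+1$ grows strictly sub-linearly on average — a bound of the shape $r(n+1) \le c\,n^{1-\delta}\,r(n)$ — which does not appear to follow from the material developed so far.
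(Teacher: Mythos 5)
Your proof follows essentially the same route as the paper's: the identity $t(n+1)=(n+1)\,t^o(n)$ from (\ref{toi}) combined with the growth rate to get $t^o(n)/t(n)\sim\lambda/(n+1)\to 0$, then the decomposition (\ref{tooii}) for the both-ends-in claim and Proposition \ref{moo} for the meander claim. Your closing caveat is also accurate but does not mark a divergence: the paper's own proof invokes ``using the growth rate $\lambda$'' in exactly the same conditional way, relying on the believed ratio asymptotic $t(n+1)/t(n)\to\lambda$ rather than on anything Fekete's lemma delivers, so your argument is neither weaker nor stronger than the original.
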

\begin{proof}
From (\ref{toi}), and using the growth rate $\lambda$, we have $(n+1)t^o(n) = t(n+1) \sim \lambda t(n)$, so that
\begin{displaymath}
\frac{t^o(n)}{t(n)} \sim \frac{\lambda}{n+1}.
\end{displaymath}
This means that the proportion of foldings of $\mathcal{T}_n$ with leaf $n$ out is asymptotically 0. This implies $\frac{t^{oo}(n)}{t(n)} \rightarrow 0$ and  $\frac{t^{io}(n)}{t(n)} \rightarrow 0$. By (\ref{tooii}), the proportion of foldings with both ends in verifies $\frac{t^{ii}(n)}{t(n)} \rightarrow 1$. The second assertion comes from Proposition \ref{moo}.
\end{proof} 

If $\alpha$ is the exponent of $\textbf{t}$, the exponent of $\textbf{r}$ is $\alpha-1$ by (\ref{tr_n}). Fig. \ref{fitrm} shows a very good fit 
\begin{displaymath}
r(n) \sim K n \ln(n) m(n),
\end{displaymath}
where the constant $K$ depends on the parity of $n$. This suggests that the exponent of $\textbf{m}$ is $\alpha-2$. The exponent of the meander shapes sequence $\textbf{a}$ should also be $\alpha-2$ by Proposition \ref{prop_meander_shapes}.\\

The quasi-exponential growth of $t(n)$ implies, by (\ref{propfold}), that the proportion of permutations of $S_n$ that are foldings $\frac{t(n)}{n!} \rightarrow 0$ (since $n!$ grows like $\left(\frac{n}{e}\right)^{n+\frac{1}{2}}$). Nevertheless, as noted at the end of section \ref{in-out}, any permutation is either a folding or is generated by cyclic shift from a lower order permutation that is a folding. From (\ref{propfold}), we deduce
\begin{displaymath} 
\sum_{k=3}^{\infty} \frac{t^i(k)}{k!} = 1.
\end{displaymath}
Also from (\ref{propfold}), we obtain an expression for the number of stamp foldings of order $n$ in terms of foldings of order $\ge n$:
\begin{displaymath}
\frac{t(n)}{n!} = \sum_{k=n}^{\infty} \frac{t^i(k)}{k!}.
\end{displaymath}

\appendix
\nonumber \section{Computation of the Tables}
The values of Jensen \cite{jensen2000a} have been used for the folding sequence $\textbf{r}$ and the meander sequence $\textbf{m}$. The terms of the sequence $\textbf{k}$ of symmetric folding shapes were computed using a variant of Lunnon's algorithm \cite{lunnon1968}. Among these shapes, those with leaf $n$ out were determined, giving the sequence $\textbf{k}^o$. Then the terms of the sequences $\textbf{b}$ and $\textbf{a}$ of folding and meander shapes were computed using Propositions \ref{blank} and \ref{prop_meander_shapes} respectively.

\begin{figure}[ht]
\begin{center}
\includegraphics[scale=0.9]{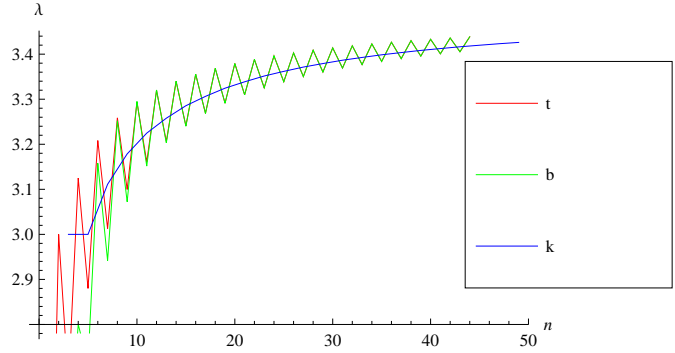}
\caption{One-step growth rates of the sequences $t(n)$, $b(n)$, and $k(p)$ ($k(p+1)/k(p)$ is plot at $n=2p+1$).}
\label{growthrate_rbk}
\end{center}
\end{figure}

\begin{figure}[ht]
\begin{center}
\includegraphics[scale=0.9]{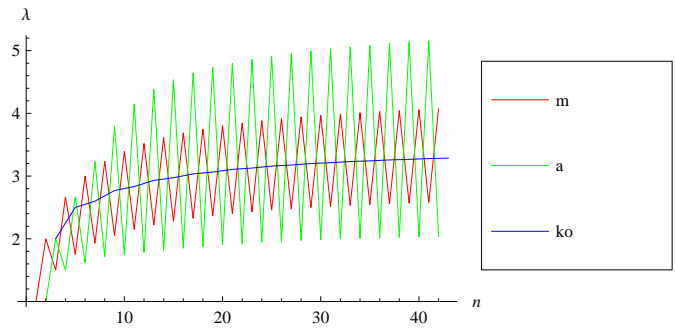}
\caption{One-step growth rates of the sequences $m(n)$, $a(n)$, and $k^o(p)$ ($k^o(p+1)/k^o(p)$ is plot at $n=2p+1$).}
\label{growthrate_mako}
\end{center}
\end{figure}

\begin{figure}[ht]
\begin{center}
\includegraphics[scale=0.8]{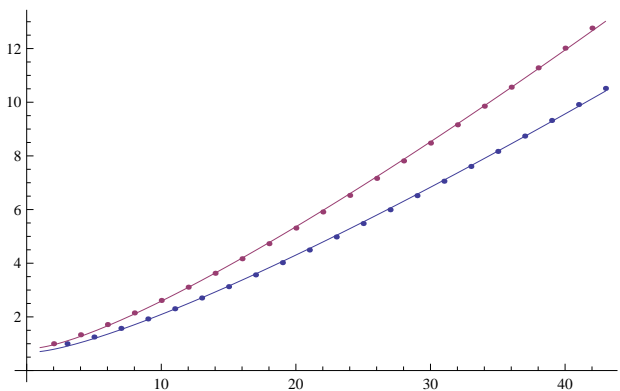}
\caption{The ratios $r(n)/m(n)$ (dots) are well fitted by $n\ln(n)$ (lines) 
for $n$ odd and even (procedure \textit{LinearModelFit} of Mathematica \cite{mathematica2010}).}
\label{fitrm}
\end{center}
\end{figure}

\begin{table}[ht]
\footnotesize
\begin{center}
\begin{tabular}{| l l l l |}
\hline
$n$ &$r(n)$ &$b(n)$ &$k(p)$\\
\hline
1	&1                     &1								&1\\
2	&1                     &1								&\\
3	&2                     &2								&1\\
4	&4                     &5								&\\
5	&10                    &14								&3\\
6	&24                    &38								&\\
7	&66                    &120							&9\\
8	&174                   &353							&\\
9	&504                   &1148							&28\\
10	&1406                  &3527							&\\
11	&4210                  &11622							&89\\
12	&12198                 &36627							&\\
13	&37378                 &121622						&287\\
14	&111278                &389560						&\\
15	&346846                &1301140						&935\\
16	&1053874               &4215748						&\\
17	&3328188               &14146335						&3072\\
18 &10274466              &46235800						&\\
19	&32786630              &155741571					&10157\\
20	&102511418             &512559195					&\\
21	&329903058             &1732007938					&33767\\
22	&1042277722            &5732533570					&\\
23	&3377919260            &19423092113					&112736\\
24	&10765024432           &64590165281					&\\
25	&35095839848           &219349187968				&377836\\
26	&112670468128          &732358098471				&\\
27	&369192702554          &2492051377341				&1270203\\
28	&1192724674590         &8349072895553				&\\
29	&3925446804750         &28459491475593				&4282311\\
30	&12750985286162        &95632390173152				&\\
31	&42126805350798        &326482748703999			&14470629\\
32	&137494070309894       &1099952564143246			&\\
33	&455792943581400       &3760291809049416			&49005732\\
34	&1493892615824866      &12698087239648594			&\\
35	&4967158911871358      &43462640562005209			&166261653\\
36	&16341143303881194     &147070289751324061		&\\
37	&54480174340453578     &503941612931723170		&565055147\\
38	&179830726231355326    &1708391899249131306		&\\
39	&600994488311709056    &5859696262000756532		&1923186472\\
40	&1989761816656666392   &19897618166731615449		&\\
41	&6664356253639465480   &68309651603081955628		&6554868916\\
42	&22124273546267785420  &232304872236332885771	&\\
43	&74248957195109578520  &798176289858611935664	&22367933148\\
44	&247100408917982623532 &2718104498099497818482	&\\
45	&830776205506531894760 &9346232311986692725748	&76417819396\\
\hline
47 &                      &								&261335128098\\
49 &                      &                        &894597454360\\
51	&                      &							   &3064970675173\\
\hline
\end{tabular}
\end{center}
\caption{$r(n)$: number of foldings of $n$ labeled stamps with leaf 1 on top. $b(n)$: number of foldings of $n$ blank stamps. $k(p)$: number of symmetric foldings of $n=2p+1$ blank stamps.}
\label{table_stamps}
\end{table}

\begin{table}[ht]
\footnotesize
\begin{center}
\begin{tabular}{| l l l l |}
\hline
$n$ &$m(n)$ &$a(n)$ &$k^o(p)$\\
\hline
1	&1									&1					         &1\\
2	&1									&1					         &\\
3	&2									&1					         &1\\
4	&3									&2					         &\\
5	&8									&3					         &2\\
6	&14								&8					         &\\
7	&42								&13				         &5\\
8	&81								&42				         &\\
9	&262								&72				         &13\\
10	&538								&273				         &\\
11	&1828								&475				         &36\\
12	&3926								&1970				         &\\
13	&13820							&3506				         &102\\
14	&30694							&15368			         &\\
15	&110954							&27888			         &299\\
16	&252939							&126510			         &\\
17	&933458							&233809			         &889\\
18	&2172830							&1086546			         &\\
19 &8152860							&2039564			         &2698\\
20	&19304190						&9652364			         &\\
21	&73424650						&18360296		         &8267\\
22	&176343390						&88172609		         &\\
23	&678390116						&169610371		         &25684\\
24	&1649008456						&824506191		         &\\
25	&6405031050						&1601297937		         &80349\\
26	&15730575554					&7865294687					&\\
27	&61606881612					&15401847339				&253872\\
28	&152663683494					&76331857094				&\\
29	&602188541928					&150547538649				&806334\\
30	&1503962954930					&751981532942				&\\
31	&5969806669034					&1492452957398				&2580279\\
32	&15012865733351				&7506432993145				&\\
33	&59923200729046				&14980804327584			&8290645\\
34	&151622652413194				&75811326673326			&\\
35	&608188709574124				&152047190790814			&26794566\\
36	&1547365078534578				&773682540353704			&\\
37	&6234277838531806				&1558569503073541			&86881179\\
38	&15939972379349178			&7969986193751019			&\\
39	&64477712119584604			&16119428171413211		&283034120\\
40	&165597452660771610			&82798726340037900		&\\
41	&672265814872772972			&168066454180454102		&924521718\\
42	&1733609081727968492			&866804540900696571		&\\
43	&7060941974458061392			&1765235495130283117		&3031535538\\
\hline
45	&									&								&9962795554\\
\hline
\end{tabular}
\end{center}
\caption{$m(n)$: number of meanders through $n$ bridges. $a(n)$: number of meander shapes with $n$ crossings. $k^o(p)$: number of symmetric meander shapes with $n=2p+1$ crossings.}
\label{table_meanders}
\end{table}

\bigskip
\hrule
\bigskip

\noindent 2000 \textit{Mathematics Subject Classification}: Primary 05A05; Secondary 11A05.

\noindent \textit{Keywords}: stamp foldings, symmetric foldings, meanders, permutations, cyclic shift.

\bigskip
\hrule
\bigskip

\end{document}